\newcommand{\id}{\operatorname{id}}
 \newcommand{\supp}{\operatorname{supp}}
\newcommand{\IM}{\operatorname{Im}}
\newcommand{\EXP}{\operatorname{exp}}
   \theoremstyle{plain}
   \newtheorem{thm}{Theorem}[section]
   \newtheorem{prop}[thm]{Proposition}
   \newtheorem{lemma}[thm]{Lemma}  
   \newtheorem{cor}[thm]{Corollary}
   \theoremstyle{definition}
   \theoremstyle{remark}
   \numberwithin{equation}{section}
        \date{\today}
\title[The $C^*$-algebra of the exponential function]{The
  $C^*$-algebra of the exponential function}
\author{Klaus Thomsen}
\date{\today}
\email{matkt@imf.au.dk}
\address{Institut for matematiske fag, Ny Munkegade, 8000 Aarhus C, Denmark}
\begin{document}

\maketitle

\begin{abstract} The complex exponential function $e^z$ is a local
  homeomorphism and gives therefore rise to an \'etale groupoid and a
  $C^*$-algebra. We show that this $C^*$-algebra is simple, purely
  infinite, stable and classifiable by K-theory, and has both K-theory groups
  isomorphic to $\mathbb Z$. The same methods show that the $C^*$-algebra of the
  anti-holomorphic function $\overline{e^z}$ is the stabilisation of the
  Cuntz-algebra $\mathcal O_3$. 

\end{abstract}

\section{Introduction} 
The crossed product of a locally compact
Hausdorff space by a homeomorphism has been generalised to local
homeomorphisms in the work of Renault, Deaconu and
Anantharaman-Delaroche, \cite{Re},\cite{De}, \cite{An}. In many cases
the algebra is both simple and purely infinite, and can be determined
by the use of the Kirchberg-Phillips classification result. The
purpose with the present note is to demonstrate how methods and
results about the iteration of complex holomorphic functions can be
used for this purpose. This will be done by determining the
$C^*$-algebra of an entire holomorphic function $f$ when $f'(z) \neq
0$ and $\# f^{-1}(f(z)) \geq
2$ for all $z \in \mathbb C$, and the Julia set $J(f)$ of $f$ is the
whole complex plane $\mathbb C$. A prominent class of functions with
these properties is the family $\lambda e^z$ where $\lambda >
\frac{1}{e}$. These functions commute with complex conjugation and we
can therefore use the same methods to determine the $C^*$-algebra of the
anti-holomorphic function $\overline{f}$. The results are as stated in
the abstract for $f(z) = e^z$.

\section{The $C^*$-algebra of a local homeomorphism}

\subsection{The definition}
We describe in this section the construction of a $C^*$-algebra from a local
homeomorphism. It was introduced in increasing generality by
J. Renault \cite{Re}, V. Deaconu \cite{De} and Anantharaman-Delaroche
\cite{An}.

Let $X$ be a second countable locally compact Hausdorff space and
$\varphi : X \to X$ a local homeomorphism. Set
$$
\Gamma_{\varphi} = \left\{ (x,k,y) \in X \times \mathbb Z  \times X :
  \ \exists n,m \in \mathbb N, \ k = n -m , \ \varphi^n(x) =
  \varphi^m(y)\right\} .
$$
This is a groupoid with the set of composable pairs being
$$
\Gamma_{\varphi}^{(2)} \ =  \ \left\{\left((x,k,y), (x',k',y')\right) \in \Gamma_{\varphi} \times
  \Gamma_{\varphi} : \ y = x'\right\}.
$$
The multiplication and inversion are given by 
$$
(x,k,y)(y,k',y') = (x,k+k',y') \ \text{and}  \ (x,k,y)^{-1} = (y,-k,x)
.
$$
Note that the unit space of $\Gamma_{\varphi}$ can be identified with
$X$ via the map $x \mapsto (x,0,x)$. Under this identification the
range map $r: \Gamma_{\varphi} \to X$ is the projection $r(x,k,y) = x$
and the source map the projection $s(x,k,y) = y$.

To turn $\Gamma_{\varphi}$ into a locally compact topological groupoid, fix $k \in \mathbb Z$. For each $n \in \mathbb N$ such that
$n+k \geq 0$, set
$$
{\Gamma_{\varphi}}(k,n) = \left\{ \left(x,l, y\right) \in X \times \mathbb
  Z \times X: \ l =k, \ \varphi^{k+n}(x) = \varphi^n(y) \right\} .
$$
This is a closed subset of the topological product $X \times \mathbb Z
\times X$ and hence a locally compact Hausdorff space in the relative
topology.
Since $\varphi$ is locally injective $\Gamma_{\varphi}(k,n)$ is an open subset of
$\Gamma_{\varphi}(k,n+1)$ and hence the union
$$
{\Gamma_{\varphi}}(k) = \bigcup_{n \geq -k} {\Gamma_{\varphi}}(k,n) 
$$
is a locally compact Hausdorff space in the inductive limit topology. The disjoint union
$$
\Gamma_{\varphi} = \bigcup_{k \in \mathbb Z} {\Gamma_{\varphi}}(k)
$$
is then a locally compact Hausdorff space in the topology where each
${\Gamma_{\varphi}}(k)$ is an open and closed set. In fact, as is easily verified, $\Gamma_{\varphi}$ is a locally
compact groupoid in the sense of \cite{Re}, i.e. the groupoid
operations are all continuous, and an \'etale groupoid in the sense
that the range and source maps are local homeomorphisms.

To obtain a
$C^*$-algebra, consider the space $C_c\left(\Gamma_{\varphi}\right)$ of
continuous compactly supported functions on $\Gamma_{\varphi}$. They form
a $*$-algebra with respect to the convolution-like product
$$
f \star g (x,k,y) = \sum_{z,n+ m = k} f(x,n,z)g(z,m,y)
$$
and the involution
$$
f^*(x,k,y) = \overline{f(y,-k,x)} .
$$
To obtain a $C^*$-algebra, let $x \in X$ and consider the
Hilbert space $l^2\left(s^{-1}(x)\right)$ of square summable functions on $s^{-1}(x) = \left\{ (x',k,y')
    \in \Gamma_{\varphi} : \ y' = x \right\}$ which carries a
  representation $\pi_x$ of the $*$-algebra $C_c\left(\Gamma_{\varphi}\right)$
  defined such that
\begin{equation*}\label{pirep}
\left(\pi_x(f)\psi\right)(x',k, x) = \sum_{z, n+m = k}
f(x',n,z)\psi(z,m,x)  
\end{equation*}
when $\psi  \in l^2\left(s^{-1}(x)\right)$. One can then define a
$C^*$-algebra $C^*_r\left(\Gamma_{\varphi}\right)$ as the completion
of $C_c\left(\Gamma_{\varphi}\right)$ with respect to the norm
$$
\left\|f\right\| = \sup_{x \in X} \left\|\pi_x(f)\right\| .
$$
Since we assume that $X$ is second countable it follows that
$C^*_r\left(\Gamma_{\varphi}\right)$ is separable. 
Note that this $C^*$-algebra can be constructed from any locally
compact \'etale groupoid $\Gamma$ in the place of
$\Gamma_{\varphi}$, see e.g. \cite{Re}, \cite{An}. Note also that $C^*_r\left(\Gamma_{\varphi}\right)$
is the classical crossed product $C_0(X) \times_{\varphi}
\mathbb Z$ when $\varphi$ is a homeomorphism.

\section{The generalised Pimsner-Voiculescu exact sequence}

There is a six-terms exact sequence which can be used to calculate the
$K$-theory of $C^*_r\left(\Gamma_{\varphi}\right)$. It was obtained
from the work of Pimsner, \cite{Pi}, by Deaconu and Muhly in a
slightly different setting in \cite{DM}. In particular, Deaconu and
Muhly require $\varphi$ to be surjective and essentially free, but thanks
to the work of Katsura in \cite{Ka} we can now establish it for
arbitrary local homeomorphisms. This generalisation will be important
here because $e^z$ is not surjective.

Consider the set
$$
\Gamma_{\varphi}(1,0) = \left\{ \left(x,1,y\right) \in
  \Gamma_{\varphi}(1) : \ y =
  \varphi(x) \right\} ,
$$   
which is an open subset of $\Gamma_{\varphi}(1)$ and hence of $\Gamma_{\varphi}$. Set $E_0 = C_c\left(\Gamma_{\varphi}(1,0)\right)$. Note that $f^*g \in C_c(\varphi(X))
  \subseteq C_c\left(\Gamma_{\varphi}\right)$ when $f,g \in E_0$. In fact
$$
f^*g(x,k,y) = \begin{cases} 0, & \ k \neq 0 \ \vee \ x \neq y \ \vee \
  x \notin \varphi(X) \\ \sum_{z \in
    \varphi^{-1}(x)} \overline{f(z,1,x)}g(z,1,x), & \ k = 0 \ \wedge \ x =
  y \in \varphi(X). \end{cases}
$$
It follows
  that the closure $E$ of $E_0$ in $C^*_r\left(\Gamma_{\varphi}\right)$ is a
  Hilbert $C_0(X)$-module with an $C_0(X)$-valued inner product $\left<
  \cdot, \ \cdot \right>$ defined such that
$\left< f,g\right> = f^*g, \  f,g \in E$. Since $C_0(X) E \subseteq E$
we can consider $E$ as a
$C^*$-correspondence over $C_0(X)$ in the obvious way, cf. Definition
1.3 of \cite{Ka}. Let $\iota :
C_0(X) \to C^*_r\left(\Gamma_{\varphi}\right)$ and $t : E \to
C^*_r\left(\Gamma_{\varphi}\right)$ be the inclusion maps. Then $(\iota,t)$
is an injective representation of the $C^*$-correspondence $E$ in the sense of
Katsura, cf. Definitions 2.1 and 2.2 of \cite{Ka}.

Let $\mathbb K(E)$ be the $C^*$-algebra of adjointable operators on
$E$ generated by the elementary operators $\Theta_{f,g}, f,g \in E$,
where $\Theta_{f,g}(k) = f \left<g,k\right>$.

\begin{lemma}\label{rep} $C_0(X) \subseteq \mathbb K(E)$.
\end{lemma}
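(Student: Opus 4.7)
The plan is to realise the left action of $C_0(X)$ on $E$ as a norm-limit of finite sums of rank-one operators $\Theta_{f,g}$, exploiting the fact that $\varphi$ is locally injective. The key observation is that the left action of $a \in C_0(X)$ on $E$ (coming from $\iota$ and the convolution product) is nothing but pointwise multiplication:
$$
(\iota(a) \star h)(x,1,\varphi(x)) \; = \; a(x)\, h(x,1,\varphi(x)),
$$
since $\iota(a)$ is supported on the diagonal of $\Gamma_\varphi$. So the task is to produce, for every $a \in C_0(X)$, a compact operator on $E$ whose action is multiplication by $a(\cdot)$ in the first coordinate.

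First, I would fix an open set $U \subseteq X$ on which $\varphi$ restricts to a homeomorphism onto $\varphi(U)$, and take $f, g \in C_c(\Gamma_\varphi(1,0))$ supported in the open slice $\{(x,1,\varphi(x)) : x \in U\}$. A direct computation from the inner product formula gives
$$
\Theta_{f,g}(h)(x,1,\varphi(x)) \; = \; f(x,1,\varphi(x)) \sum_{z \in \varphi^{-1}(\varphi(x))} \overline{g(z,1,\varphi(x))}\, h(z,1,\varphi(x)).
$$
The local injectivity of $\varphi$ on $U$ collapses this sum: if $x \in U$ then $z = x$ is the unique element of $\varphi^{-1}(\varphi(x)) \cap U$, and $g$ vanishes off $U$; if $x \notin U$ then $f(x,1,\varphi(x)) = 0$. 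Hence $\Theta_{f,g}$ coincides on $E$ with left multiplication by the function $F(x) := f(x,1,\varphi(x))\,\overline{g(x,1,\varphi(x))}$, and by taking $f$ to be any prescribed element of $C_c(U)$ and $g$ a bump function equal to $1$ on $\supp f$, the function $F$ ranges over all of $C_c(U)$.

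Next, I would choose a locally finite open cover $\{U_i\}$ of $X$ on each piece of which $\varphi$ is injective (such a cover exists because $\varphi$ is a local homeomorphism and $X$ is second countable), together with a subordinate partition of unity $\{\chi_i\}$. For $a \in C_c(X)$, the decomposition $a = \sum_i a\chi_i$ is a finite sum, each $a\chi_i$ lies in $C_c(U_i)$, and by the previous step each $a\chi_i$ acts on $E$ as an element of $\mathbb K(E)$; therefore so does $a$. Since $C_c(X)$ is norm-dense in $C_0(X)$, since the left action $\iota$ is a $*$-homomorphism (hence norm-decreasing), and since $\mathbb K(E)$ is closed in $\mathcal L(E)$, this extends to all of $C_0(X)$, giving the inclusion $C_0(X) \subseteq \mathbb K(E)$.

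The only subtle step is the collapse computation for $\Theta_{f,g}$; once that is written out cleanly, the partition-of-unity argument and the closure step are standard. I do not foresee a real obstacle beyond ensuring that the open cover by sets of local injectivity can be chosen subordinate to the supports used in the decomposition of $a$, which is automatic from the local homeomorphism hypothesis.
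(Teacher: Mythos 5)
Your proposal is correct and follows essentially the same route as the paper: both arguments hinge on the observation that for $f,g$ supported in a slice $\{(x,1,\varphi(x)) : x \in U\}$ over a set $U$ of injectivity, the sum in $\Theta_{f,g}$ collapses and the operator becomes left multiplication by a function in $C_c(U)$, after which a partition of unity and norm-density finish the argument. The paper merely streamlines this by taking $g=f$ with $f(x,1,\varphi(x))=\sqrt{h(x)}$ so that $h=\Theta_{f,f}$ directly, and leaves the partition-of-unity and closure steps implicit.
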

\begin{proof} Since $\Theta_{f,g}(k) = fg^*k$ when $f,g,k \in E$ it
  suffices to show that the elements of $C_0(X)$ of the form $fg^*$
  for some $f,g \in E$ span a dense subspace of $C_0(X)$. Let $U$ be
  an open subset of $X$ where $\varphi$ is injective and consider a
  non-negative function $h \in C_c(X)$ supported in $U$. Then
$$
W = \left\{ (x,1,\varphi(x)) : \ x \in U \right\}
$$
is an open subset of $\Gamma_{\varphi}(1,0)$ and we define $f \in E_0$
such that $\supp f \subseteq W$ and $f(x,1,\varphi(x)) = \sqrt{h(x)}, \ x
\in U$. Then $h = \Theta_{f,f}$ and we are done.

\end{proof}

It follows that $(\iota, t)$ is covariant in the sense of \cite{Ka},
cf. Proposition 3.3 and Definition 3.4 of \cite{Ka}, and there
is therefore an associated $*$-homomorphism $\rho : \mathcal O_E \to
C^*_r\left(\Gamma_{\varphi}\right)$.

\begin{prop}\label{cuntzpimsner} $\rho : \mathcal O_E \to
  C^*_r\left(\Gamma_{\varphi}\right)$ is an isomorphism.
\end{prop}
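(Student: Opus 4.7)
The plan is to apply Katsura's gauge-invariant uniqueness theorem (Theorem~6.4 of \cite{Ka}) to obtain injectivity of $\rho$, and then to verify surjectivity directly using the explicit structure of $\Gamma_\varphi$. Covariance of $(\iota, t)$ and injectivity of the $C_0(X)$-part are already in hand from the preceding discussion and Lemma~\ref{rep}, so the only extra ingredient required for injectivity is a compatible gauge action.

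\textbf{Gauge action and injectivity.} The $\mathbb{Z}$-valued cocycle $c(x,k,y) = k$ on $\Gamma_\varphi$ yields an action $\beta : \mathbb{T} \to \Aut(C^*_r(\Gamma_\varphi))$ defined on $C_c(\Gamma_\varphi)$ by $(\beta_z f)(x,k,y) = z^k f(x,k,y)$. This is a $*$-automorphism of the convolution $*$-algebra $C_c(\Gamma_\varphi)$, and it extends to $C^*_r(\Gamma_\varphi)$ because on each $l^2(s^{-1}(x))$ it is implemented by the diagonal unitary $(x', k, x) \mapsto z^k (x', k, x)$, so $\beta_z$ is norm-preserving on $C_c(\Gamma_\varphi)$. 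By construction $\beta_z \circ \iota = \iota$ and $\beta_z \circ t = z \cdot t$, so $\beta$ is a gauge action in the sense of Katsura and the theorem yields that $\rho$ is injective.

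\textbf{Surjectivity.} The image $A := \rho(\mathcal{O}_E)$ is a $C^*$-subalgebra of $C^*_r(\Gamma_\varphi)$ containing $C_0(X)$ and $E$. I will show that $C_c(\Gamma_\varphi(k,n)) \subseteq A$ for every $k \in \mathbb{Z}$ and every $n \geq \max(-k, 0)$, which suffices by density. For $k \geq 0$ and $f_1, \dots, f_k \in E_0$ supported on bisections $W_i = \{(x, 1, \varphi(x)) : x \in U_i\}$ chosen compatibly so that $\varphi(U_i) \subseteq U_{i+1}$, a direct calculation shows that the convolution $f_1 \star \cdots \star f_k$ lies in $C_c(\Gamma_\varphi(k, 0))$; using that $\varphi$ is a local homeomorphism, together with partitions of unity and a Stone--Weierstrass argument on each bisection, such products span a dense subspace of $C_c(\Gamma_\varphi(k, 0))$. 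Similarly, convolutions $(h_1 \star \cdots \star h_n) \star (g_1 \star \cdots \star g_n)^*$ with $h_i, g_i \in E_0$ lie in $\mathbb{K}(E^{\otimes n}) \subseteq A$ and span a dense subspace of $C_c(\Gamma_\varphi(0, n))$. Since every $(x, k, y) \in \Gamma_\varphi(k, n)$ factors as $(x, k, \varphi^k(x)) \cdot (\varphi^k(x), 0, y)$ with the first factor in $\Gamma_\varphi(k, 0)$ and the second in $\Gamma_\varphi(0, n)$, convolving the two families yields density in $C_c(\Gamma_\varphi(k, n))$. For $k < 0$ we take adjoints.

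The main obstacle is the density argument on the pieces $C_c(\Gamma_\varphi(k, 0))$ and $C_c(\Gamma_\varphi(0, n))$: one must choose bisections of $\Gamma_\varphi$ carefully so that the various convolution products $f_1 \star \cdots \star f_k$ and $\xi \eta^*$ (with $\xi, \eta \in E^{\otimes n}$) cover every open bisection of $\Gamma_\varphi(k, n)$. This is a standard but somewhat technical combination of local triviality of $\varphi$ with partitions of unity subordinate to a bisection cover. Once those approximation lemmas are in hand, both the injectivity through Katsura's theorem and the handling of the $k < 0$ case are essentially automatic.
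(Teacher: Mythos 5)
Your proposal is correct and follows essentially the same route as the paper: invoke the gauge action $\beta_\lambda(f)(x,k,y)=\lambda^k f(x,k,y)$ together with Katsura's Theorem~6.4, and then reduce everything to showing that $C_0(X)$ and $E$ generate $C^*_r\left(\Gamma_{\varphi}\right)$ by exhausting the pieces $C_c\left(\Gamma_{\varphi}(k,n)\right)$. The only difference is organizational: where you factor $\Gamma_{\varphi}(k,n)$ as $\Gamma_{\varphi}(k,0)\cdot\Gamma_{\varphi}(0,n)$ and defer the bisection/partition-of-unity density lemmas as ``standard but technical,'' the paper supplies exactly those lemmas by an induction on $n$, writing a function $h$ supported on a bisection of $\Gamma_{\varphi}(k,n+1)$ as $\tilde{a}\tilde{h}\tilde{b}^*$ with $\tilde{a},\tilde{b}\in E_0$ and $\tilde{h}$ supported in $\Gamma_{\varphi}(k,n)$, and handling $k\geq 0$, $n=0$ by the same product $f_1\cdots f_k$ of elements of $E_0$ that you use.
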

\begin{proof} By construction $C^*_r\left(\Gamma_{\varphi}\right)$ carries an action
$\beta$ by the circle $\mathbb T$ defined such that
$$
\beta_{\lambda}(f)(x,k,y) = \lambda^k f(x,k,y)
$$
when $f \in C_c\left(\Gamma_{\varphi}\right)$. This is the \emph{gauge
  action}. This action ensures that $(\iota,t)$ admits a gauge action in the
  sense of \cite{Ka}. By Theorem 6.4 of \cite{Ka} it suffices
  therefore to show that
  $C^*_r\left(\Gamma_{\varphi}\right)$ is generated, as a $C^*$-algebra, by
  $C_0(X)$ and $E$.

Let $\mathcal A$ be the $*$-subalgebra of
  $C_c\left(\Gamma_{\varphi}\right)$ generated by $C_c(X)$ and
  $E_0$. Let $k \in \mathbb N$. We claim that
  $C_c\left(\Gamma_{\varphi}(k,0)\right) \in \mathcal A$. Since
  $C_c\left(\Gamma_{\varphi}(0,0)\right) = C_c(X)$ and $E_0 =
  C_c\left(\Gamma_{\varphi}(1,0)\right)$ it suffices to prove this
  when $k \geq 2$. To this end
  it suffices, since $C_c(X) \subseteq \mathcal A$, to consider an
  open subset $U$ of $X$ on which $\varphi^k$ is injective and show
  that any non-negative continuous function $h$ compactly supported in
  $\left\{ (x,k,\varphi^k(x)) : \ x \in U\right\}$ is in
  $\mathcal A$. To this end, let $f_j \in E_0$ be supported in
$$
\left\{ (y,1,\varphi(y)) : \ y \in \varphi^{j-1}(U)\right\}
$$
and satisfy that $f_j\left(\varphi^{j-1}(x),1,\varphi^{j}(x)\right) =
h(x,k,\varphi^k(x))^{\frac{1}{k}}$ for all $x \in U$. Then $h =
f_1f_2\cdots f_{k}$ and hence $h \in \mathcal A$.

We
will next prove by induction that $C_c\left(\Gamma_{\varphi}(k,n)\right) \subseteq
\mathcal A$ for all $n \in \mathbb N$. The assertion holds when $n
=0$ as we have just shown, so assume that it holds for $n$. To show that
$C_c\left(\Gamma_{\varphi}(k,n+1)\right) \subseteq \mathcal A$, let $U$ and $V$
be open subsets in $X$ such that $\varphi^n$ is injective on both $U$
and $V$. It suffices to consider a continuous function $h$
compactly supported in $\Gamma_{\varphi}(k,n+1) \cap (U \times \{k\}
\times V)$ and show that $h \in \mathcal A$. Note that $W =\Gamma_{\varphi}(k,n) \cap \left(\varphi(U) \times \{k\} \times
  \varphi(V)\right)$ is open in $\Gamma_{\varphi}(k,n)$ and that we can define
$\tilde{h} : W \to \mathbb C$ such that $\tilde{h}(x,k,y) =
h\left(x',k,y'\right)$, where $x'\in U, y' \in V$ and $\varphi(x') =x, \
  \varphi(y') = y$. Then $\tilde{h}$ is continuous and has
  compact support in $W$; in fact,
  the support is the image of the support, $K$, of $h$ under the continuous
  map $\Gamma_{\varphi}(k,n+1) \ni (x,k,y) \mapsto
  (\varphi(x),k,\varphi(y)) \in \Gamma_{\varphi}(k,n)$. Hence $\tilde{h}
  \in \mathcal A$ by assumption. Note that $r(K)$ is a compact subset
  of $U$ and $s(K)$ a compact subset of $V$. Let $a \in C_c(X)$ be
  supported in $U$ such that $a(x) = 1, \ x \in r(K)$, and $b \in C_c(X)$ be
  supported in $V$ such that $b(x) = 1, \ x \in s(K)$. Define
  $\tilde{a},\tilde{b} \in C_c\left(\Gamma_{\varphi}(1,0)\right) =E_0$
  with supports in $\left\{ (x,1,\varphi(x)) : \ x \in U\right\}$ and
  $\left\{ (x,1,\varphi(x)) : \ x \in V\right\}$, respectively, such that $\tilde{a}(x,1,\varphi(x)) =
  a(x)$ when $x \in U$ and $\tilde{b}(x,1,\varphi(x)) = b(x)$ when $x
  \in V$. Since $h = \tilde{a}\tilde{h}\tilde{b}^*$ we conclude that $h
  \in \mathcal A$. Thus $C_c\left(\Gamma_{\varphi}(k,n)\right) \subseteq \mathcal A$ for all $k
  \geq 0, n \geq 0$. Since $C_c\left(\Gamma_{\varphi}(-k,n)\right)^* =
C_c\left(\Gamma_{\varphi}(n,n-k)\right)$ when $n \geq k \geq 0$, we conclude that $\mathcal A =
C_c\left(\Gamma_{\varphi}\right)$.

\end{proof}

By combining Proposition \ref{cuntzpimsner} and Lemma \ref{rep} with
Theorem 8.6 of \cite{Ka} we obtain the following.

\begin{thm}\label{6terms} (Deaconu and Muhly, \cite{DM}) Let $[E] \in KK\left(C_0(X),C_0(X)\right)$
  be the element represented by the embedding $C_0(X) \subseteq \mathbb
  K(E)$. There is an exact sequence
\begin{equation*}
\begin{xymatrix}{
K_0\left(C_0(X)\right) \ar[r]^-{\id_* - [E]_*} &  K_0\left(C_0(X)\right)
\ar[r]^-{\iota_*} & K_0\left(C^*_r\left(\Gamma_{\varphi}\right)\right) \ar[d]
  \\
 K_1\left(C^*_r\left(\Gamma_{\varphi}\right)\right) \ar[u] &
   K_1\left(C_0(X)\right) \ar[l]^{\iota_*} & K_1\left(C_0(X)\right) \ar[l]^-{\id_* - [E]_*}}
\end{xymatrix}
\end{equation*}
\end{thm}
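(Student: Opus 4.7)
The plan is to combine the three ingredients assembled just before the statement: the identification $\mathcal O_E \cong C^*_r(\Gamma_{\varphi})$ from \refprop{cuntzpimsner}, the inclusion $C_0(X) \subseteq \mathbb K(E)$ from \reflemma{rep}, and Katsura's six-terms exact sequence for Cuntz-Pimsner algebras of $C^*$-correspondences (Theorem 8.6 of \cite{Ka}). Everything else is a matter of checking that the hypotheses of Katsura's theorem are met and that its boundary maps specialise to $\id_* - [E]_*$ in our setting.

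First I would use $\rho$ from \refprop{cuntzpimsner} to transport the question into $\mathcal O_E$, so that $\iota : C_0(X) \to C^*_r(\Gamma_{\varphi})$ becomes the canonical embedding $C_0(X) \hookrightarrow \mathcal O_E$. The statement of Katsura's Theorem 8.6 involves an ideal $J_E \subseteq C_0(X)$, namely the largest ideal on which the left action of $C_0(X)$ on $E$ is by compact operators and on which it is injective. I would check that in our situation $J_E = C_0(X)$: the entire left action lands in $\mathbb K(E)$ by \reflemma{rep}, and injectivity is immediate, since for $a \in C_0(X)$ and $f \in E_0$ supported on $\{(x,1,\varphi(x)): x \in U\}$ one computes $(a \cdot f)(x,1,\varphi(x)) = a(x)f(x,1,\varphi(x))$, and such sections can be arranged to take arbitrary prescribed values at any single point of $X$.

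With $J_E = C_0(X)$, Theorem 8.6 of \cite{Ka} furnishes a six-term exact sequence having $K_*(\mathcal O_E)$ at two corners, the two $\iota_*$-arrows induced by the canonical embedding $C_0(X) \hookrightarrow \mathcal O_E$, and the two remaining horizontal arrows at the level of $K_*(C_0(X))$ given by $\id_* - [E]_*$, where $[E] \in KK(C_0(X),C_0(X))$ is the $KK$-class of the correspondence, represented by the left-action homomorphism $C_0(X) \to \mathbb K(E)$. Transporting back along $\rho^{-1}$ then replaces $\mathcal O_E$ by $C^*_r(\Gamma_{\varphi})$ and produces exactly the diagram in the statement.

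The only real obstacle I anticipate is a careful matching of conventions: one must verify that, in the specialised situation $J_E = C_0(X)$, Katsura's boundary map really takes the asserted form $\id_* - [E]_*$ rather than carrying some additional term (for instance one involving the quotient $C_0(X)/J_E$, which is trivial here), and that the gauge-invariance hypothesis needed to apply Theorem 8.6 is genuinely provided by the gauge action $\beta$ already used in the proof of \refprop{cuntzpimsner}. Once these compatibilities are pinned down, the theorem follows by directly assembling the three ingredients above.
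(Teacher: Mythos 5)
Your proposal follows exactly the route the paper takes: the paper's entire proof is the one-line observation that Theorem \ref{6terms} follows by combining Proposition \ref{cuntzpimsner}, Lemma \ref{rep} and Theorem 8.6 of \cite{Ka}, and your additional verifications (that $J_E = C_0(X)$ because the left action is both compact by Lemma \ref{rep} and injective, and that the gauge action supplies the hypothesis of Katsura's theorem) are precisely the compatibilities implicitly being invoked there. The proposal is correct and essentially identical in approach, only more explicit.
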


\section{Simple purely infinite $C^*$-algebras from entire functions
  without critical points in the Julia set}

Throughout this section $f : \mathbb C \to \mathbb C$ is an
  entire function of degree at least 2; i.e. either a polynomial of degree at least 2 or a
  transcendental function. An $n$-periodic point $z \in \mathbb C$ is
  \emph{repelling} when $\left|(f^n)'(z)\right| > 1$. The \emph{Julia
    set} $J(f)$ of $f$ can then be defined as the closure of the repelling
  periodic points. Although this is not the standard definition it
  emphasises one of the properties that will be important here. Others
  are
\begin{enumerate}
\item[i)] $J(f)$ is non-empty and perfect, and
\item[ii)] $J(f)$ is totally $f$-invariant, i.e. $f^{-1}(J(f)) = J(f)$.
\end{enumerate} 
We refer to the survey by Bergweiler, \cite{Be}, for the proof of
these properties.

Let
  $\mathcal E(f)$ denote the set of points $x \in \mathbb C$ such that
  $f^{-1}(x) = \{x\}$. For example, when $f(z) = 2ze^z$ the point $0$
  will be in $\mathcal E(f) \cap J(f)$.

\begin{lemma}\label{1} $\# \mathcal E(f) \leq 1$.
\end{lemma}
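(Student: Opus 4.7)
The plan is to split on the dichotomy built into the hypothesis: $f$ is either a polynomial of degree $d \geq 2$ or a transcendental entire function, and in each case the claim follows from elementary facts (fundamental theorem of algebra, resp.\ Picard's theorem).

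First I would dispose of the polynomial case by a direct algebraic argument. If $f$ is a polynomial of degree $d \geq 2$ with leading coefficient $a$ and $x \in \mathcal{E}(f)$, then $f(z) - x$ is a polynomial of degree $d$ whose only root in $\mathbb{C}$ is $x$; by the fundamental theorem of algebra we must have
\eQ{f(z) - x = a(z-x)^d.}
If we had two distinct points $x_1, x_2 \in \mathcal{E}(f)$, subtracting the two identities would give
\eQ{x_2 - x_1 = a\bigl[(z-x_1)^d - (z-x_2)^d\bigr] \quad \text{for all } z \in \mathbb{C}.}
The right-hand side is a polynomial in $z$ whose $z^{d-1}$-coefficient is $a d (x_2 - x_1) \neq 0$, so it has degree $d-1 \geq 1$ and cannot be equal to the constant $x_2 - x_1$. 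This gives the desired contradiction.

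For the transcendental case, I would invoke Picard's theorem: since $f$ is transcendental entire, $\infty$ is an essential singularity of $f$, so $f$ takes every value in $\mathbb{C}$ infinitely often with at most one exception. Hence $f^{-1}(x)$ is infinite for all $x \in \mathbb{C}$ outside a set of cardinality at most $1$. Any $x \in \mathcal{E}(f)$ has $f^{-1}(x) = \{x\}$, which is a single point, so $\mathcal{E}(f)$ is contained in the (at most one-element) Picard exceptional set, and $\#\mathcal{E}(f) \leq 1$.

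I do not anticipate any real obstacle: both halves are essentially one-line invocations of standard results, and the dichotomy in the definition of \emph{entire function of degree at least 2} (polynomial vs.\ transcendental) is precisely what makes the argument uniform. The only thing to be careful about is requiring $d \geq 2$ in the polynomial case, which is needed to guarantee $d-1 \geq 1$ and hence that the difference polynomial is non-constant.
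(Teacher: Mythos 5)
Your proof is correct, but it follows a genuinely different route from the paper. The paper gives a single uniform argument in the language of complex dynamics: if $x\neq y$ both lie in $\mathcal E(f)$, then $\bigcup_{i\geq 0} f^i(U\setminus\{x,y\})$ is a forward-invariant open set omitting the two values $x$ and $y$ (here total invariance of singleton fibres is what keeps $x,y$ out of all forward images), so Montel's theorem forces $\{f^n\}$ to be normal there, contradicting the presence of a repelling periodic point of $J(f)$ in $U\setminus\{x,y\}$. You instead split on the polynomial/transcendental dichotomy and use the fundamental theorem of algebra (forcing $f(z)-x=a(z-x)^d$ and deriving a degree contradiction) in the first case and the \emph{great} Picard theorem in the second (note that the little Picard theorem would not suffice, since you need every non-exceptional value to be attained infinitely often, not merely attained; you state the correct version). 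Your argument has the advantage of being independent of the Julia-set machinery -- it needs neither the non-emptiness and perfectness of $J(f)$ nor the density of repelling periodic points -- and in the transcendental case it proves the stronger statement that $\mathcal E(f)$ is contained in the set of Picard exceptional values. The paper's argument buys uniformity (no case split) and economy, reusing exactly the normal-families toolkit (Montel plus the characterisation of $J(f)$ via repelling periodic points) that is already deployed in Lemma~\ref{julia}; conceptually the two are close, since the great Picard theorem is itself typically proved by a Montel-type normality argument.
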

\begin{proof} Let $x,y \in \mathcal E(f)$ and assume for a
  contradiction that $x \neq y$. Since $J(f)$ is infinite, $J(f)
  \backslash \{x,y\}$ is not empty. Let $U$ be an open subset of
  $\mathbb C$ such that 
\begin{equation}\label{nonint}
U \cap \left(J(f) \backslash \{x,y\}\right) \neq \emptyset .
\end{equation}
Then $\bigcup_{i=0}^{\infty} f^i\left( U \backslash
  \{x,y\}\right)$ is open, non-empty, $f$-invariant and does not
contain $\{x,y\}$. It follows therefore from Montel's theorem, cf. Theorem 3.7 in
  \cite{Mi}, that $f^n, n \in \mathbb N$, is a normal family when
  restricted to $\bigcup_{i=0}^{\infty} f^i\left( U \backslash
  \{x,y\}\right)$. This contradicts (\ref{nonint}) since $U \backslash
\{x,y\}$ contains a repelling periodic point.
\end{proof}

The set $J(f) \backslash \mathcal E(f)$ is locally compact in the
relative topology inherited from $\mathbb C$ and $f^{-1}\left(J(f)
  \backslash \mathcal E(f)\right) = J(f) \backslash \mathcal E(f)$. If
we now assume that $f'(z) \neq 0$ when $z \in J(f)$, it
follows that the restriction
$$
F :  J(f) \backslash \mathcal E(f) \to J(f) \backslash \mathcal E(f)
$$
of $f$ to $J(f) \backslash \mathcal E(f)$ is a local homeomorphism on
the second countable locally compact Hausdorff space $J(f) \backslash
\mathcal E(f)$. Note that $F$ is not always surjective - it is not when
$f(z) = e^z$.

Following \cite{An} we say that an \'etale groupoid $\Gamma$ with
range map $r$ and source map $s$ is
\emph{essentially free} when the points $x$ of the unit space
$\Gamma^0$ for which the isotropy group $s^{-1}(x) \cap r^{-1}(x)$ is
trivial (i.e. only consists of $\{x\}$) is dense in $\Gamma^0$. For
the groupoid $\Gamma_f$ of a local homeomorphism $f : X \to X$ this occurs if and
only if $\left\{ x \in X : \ f^i(x) = x\right\}$ has empty interior
for all $i \in \mathbb N$.

We say that $\Gamma$ is \emph{minimal} when
there is no open non-empty subset $U$ of $\Gamma^0$, other than $\Gamma^0$, which is $\Gamma$-invariant in the sense
that $r(\gamma) \in U \Leftrightarrow s(\gamma) \in U$ for all $\gamma \in
\Gamma$. This holds for
the groupoid $\Gamma_f$ if and
only if the full orbit $\bigcup_{i,j\in \mathbb N} f^{-i}(f^j(x))$ is
dense in $X$ for all $x \in X$.

Finally, we say that $\Gamma$ is \emph{locally contracting} when every
open non-empty subset of $\Gamma^0$ contains an open non-empty subset
$V$ with the property that there is an open bisection $S$ in $\Gamma$
such that $\overline{V} \subseteq s(S)$ and
$\alpha_{S}^{-1}\left(\overline{V}\right) \subsetneq V$ when $\alpha_S
: r(S) \to s(S)$ is the homeomorphism defined by $S$, cf. Definition
2.1 of \cite{An} (but note that the source map is denoted by $d$ in \cite{An}).

\begin{lemma}\label{julia}  Assume that $f'(z) \neq 0$ for all $z \in
  J(f)$. Then $\Gamma_{F}$ is minimal, essentially free and
  locally contracting in the sense of \cite{An}.
\end{lemma}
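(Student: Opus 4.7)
The plan is to verify the three properties in turn, using the criteria for local-homeomorphism groupoids recalled just above the lemma.

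\emph{Essential freeness.} Since $\deg f \geq 2$, for each $i \geq 1$ the entire function $f^i - \id$ is not identically zero, so its zero set is discrete in $\mathbb C$. Hence $\{x \in J(f) \setminus \mathcal E(f) : F^i(x) = x\}$ is discrete, in particular has empty interior, which is the criterion stated above.

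\emph{Minimality.} I need the full orbit $\bigcup_{i,j \in \mathbb N} f^{-i}(f^j(x))$ to be dense in $J(f) \setminus \mathcal E(f)$ for every $x$ in that set. I will invoke the classical density theorem (see \cite{Be}) that for any non-exceptional $z \in J(f)$, the backward orbit $\bigcup_i f^{-i}(z)$ is dense in $J(f)$; combined with \reflemma{1} and perfectness of $J(f)$, density in $J(f)$ implies density in $J(f) \setminus \mathcal E(f)$. It then remains to exhibit, for each $x \in J(f) \setminus \mathcal E(f)$, a non-exceptional forward iterate $f^j(x)$. For polynomial $f$, the condition $f^{-1}(y) = \{y\}$ forces $f(z) - y = c(z - y)^{\deg f}$ with $y$ critical, so the hypothesis $f'\neq 0$ on $J(f)$ places every finite exceptional point in the Fatou set and I can take $j = 0$. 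For transcendental $f$ there is at most one finite exceptional point $p$, which either belongs to $\mathcal E(f)$ (and is therefore already excluded) or is an omitted value, in which case $f(p) \neq p$ is non-exceptional and $j = 1$ works.

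\emph{Local contractivity.} Let $U \subseteq J(f) \setminus \mathcal E(f)$ be open and non-empty. Density of repelling periodic points in $J(f)$ yields some $p \in U$ of period $n$ with $|(f^n)'(p)| > 1$. The hypothesis $f'\neq 0$ on $J(f)$ produces a holomorphic local inverse $g$ of $f^n$ near $p$ with $g(p) = p$ and $|g'(p)| < 1$, and Koenigs' linearisation theorem provides a conformal chart $\phi$ near $p$ with $\phi(p) = 0$ and $\phi \circ f^n = \lambda \phi$, $\lambda = (f^n)'(p)$. I would pick a disk $D \ni p$ inside $U$ with $\phi(D)$ equal to a round disk $\Delta_r$, and a slightly larger open set $\tilde D$ on which $g$ remains defined, then set
\begin{equation*}
V = D \cap (J(f) \setminus \mathcal E(f)), \qquad S = \bigl\{(g(y), n, y) : y \in \tilde D \cap (J(f) \setminus \mathcal E(f))\bigr\}.
\end{equation*}
Then $S$ is an open bisection of $\Gamma_F$ with $s(S) \supseteq \overline V$, and since $g$ preserves $J(f) \setminus \mathcal E(f)$ and $g(\overline D) \subsetneq D$, one has $\alpha_S^{-1}(\overline V) = g(\overline V) \subseteq V$. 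The key step will be the strictness of this inclusion: if instead $J(f) \cap D \subseteq g(\overline D)$, then in the Koenigs chart $\phi(J(f))$ misses the annulus $\{r/|\lambda| < |z| < r\}$; iterated invariance of $\phi(J(f))$ under $z \mapsto z/\lambda$ (expressing $f^{-1}(J(f)) = J(f)$) then propagates this to every annulus $\{r/|\lambda|^{k+1} < |z| < r/|\lambda|^k\}$, $k \geq 0$, exhausting the punctured disk and forcing $J(f) \cap D \subseteq \{p\}$, contradicting perfectness. Since $\mathcal E(f)$ consists of at most one point and every non-empty open subset of $J(f)$ is uncountable, I can pick the witness $q$ in $J(f) \cap (D \setminus g(\overline D))$ avoiding $\mathcal E(f)$, giving $q \in V \setminus g(\overline V)$.

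The hard part is precisely this strictness: the planar contraction $g(\overline D) \subsetneq D$ does not automatically pass to a strict contraction of $V$ in the relative topology of $J(f) \setminus \mathcal E(f)$, because $J(f)$ may have empty interior in $\mathbb C$. The Koenigs-tiling argument sketched above is what brings perfectness of $J(f)$ to bear and closes the gap.
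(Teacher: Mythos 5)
Your proof is correct, but in two of the three parts it takes a genuinely different route from the paper. Essential freeness is in substance the same argument: both reduce to the identity theorem (for $\deg f\geq 2$ the entire function $f^i-\id$ is not identically zero, so its zero set is discrete) together with perfectness of $J(f)$ --- note that your step ``discrete, in particular has empty interior'' silently uses that $J(f)\setminus\mathcal E(f)$ has no isolated points, which is exactly where perfectness enters. For minimality the paper does not invoke the classical backward-orbit density theorem; it applies Montel's theorem directly to the saturated open set $W=\bigcup_{i,j\in\mathbb N}f^{-j}\left(f^i(U\setminus\mathcal E(f))\right)$, concluding that $\mathbb C\setminus W$ has at most one point, which must lie in $\mathcal E(f)$ by total invariance. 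That is self-contained and sidesteps your case analysis on exceptional points of polynomials versus transcendental functions; your version is also valid, but it quietly relies on the observation that $\mathcal E(f)$ consists of fixed points and is totally invariant, so that forward and backward orbits of points outside $\mathcal E(f)$ stay outside it --- worth making explicit. For local contractivity you correctly isolate the crux, namely strictness of the contraction in the relative topology of $J(f)\setminus\mathcal E(f)$, but the paper obtains it far more cheaply than Koenigs linearisation: by perfectness pick $z_1\in J(f)\setminus\mathcal E(f)$ with $0<|z_1-z_0|<\delta_0$, then choose the radius $\delta$ of $V_0$ with $|z_1-z_0|<\delta<\kappa|z_1-z_0|$ and $\delta<\delta_0$; then $z_1\in V_0\cap J(f)\setminus\mathcal E(f)$ while the expansion estimate $|f^n(z_1)-z_0|\geq\kappa|z_1-z_0|>\delta$ forces $f^n(z_1)\notin\overline{V_0}$, so $f^n(z_1)$ is the witness for $\overline{V_0}\cap J(f)\setminus\mathcal E(f)\subsetneq f^n\left(V_0\cap J(f)\setminus\mathcal E(f)\right)$. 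Your annulus-tiling argument in the Koenigs chart does work (and your care in choosing the witness $q$ off the at-most-one-point set $\mathcal E(f)$ is the right instinct), but it imports a linearisation theorem where a one-line choice of radius suffices, since the expansion estimate is already available at any repelling periodic point; the only thing the chart buys is a cleaner picture, not a weaker hypothesis.
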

\begin{proof} To show that $\Gamma_F$ is essentially free we must show
  that
\begin{equation}\label{eq12}
\left\{ z \in  J(f) \backslash \mathcal E(f) : \ F^i(z) = z \right\}
\end{equation}
has empty interior in $J(f) \backslash \mathcal E(f)$ for all $i
\in \mathbb N$. Assume that $U$ is open in $\mathbb C$ and that $U \cap J(f)
\backslash \mathcal E(f)$ is a non-empty subset of (\ref{eq12}). Since
$J(f)$ is perfect it follows that every point $z_0$ of $U \cap J(f)
\backslash \mathcal E(f)$ is the limit of a sequence from
\begin{equation*}\label{eq13} 
\left\{ z \in \mathbb C: \ f^i(z) = z \right\} \backslash \{z_0\}.
\end{equation*}
Since $f$ is entire it follows that $f^i(z) = z$ for all $z \in
\mathbb C$, contradicting that $J(f) \neq \emptyset$. Hence
$\Gamma_F$ is essentially free.

To show that $\Gamma_F$ is minimal, consider an open subset $U
\subseteq \mathbb C$ such that 
\begin{equation}\label{eq14}
U \cap
  J(f) \backslash \mathcal E(f) \neq \emptyset.
\end{equation} 
Let $W = \bigcup_{i,j \in \mathbb N}
  f^{-j}\left(   f^i(U \backslash \mathcal E(f))\right)$. Since $W$ is
  open (in $\mathbb C$), non-empty, $f$-invariant and has non-trivial intersection with
  $J(f)$ it follows from Montel's theorem, cf. Theorem 3.7 in
  \cite{Mi}, that $\mathbb C \backslash W$ contains at most one
  element. Note that this element must be in $\mathcal E(f)$ because
  $W$ and hence also $\mathbb C \backslash W$ is totally $f$-invariant. It
  follows therefore that $W \cap J(f) \backslash
  \mathcal E(f) = J(f) \backslash
  \mathcal E(f)$. Hence
$$
\bigcup_{i,j \in \mathbb N}
  f^{-j}\left(   f^i(U \cap J(f) \backslash \mathcal E(f))\right) = W
  \cap J(f)  = J(f) \backslash
  \mathcal E(f) ,
$$
proving that $\Gamma_F$ is minimal. 

To show that $\Gamma_F$ is locally contracting consider an open subset
$U$ of $\mathbb C$ such that $U \cap J(f) \backslash \mathcal E(f)
\neq \emptyset$. There is then a repelling periodic point $z_0 \in U
\cap J(f) \backslash \mathcal E(f)$. There is therefore an $n \in
\mathbb N$, a positive number $\kappa > 1$ and an open neighbourhood $W
\subseteq U$ of $z_0$ such that $f^n(z_0) =
z_0$, $f^n$ is injective on $W $
and\begin{equation}\label{eu1}
\left|f^n(y) -z_0\right| \geq \kappa |y -z_0|
\end{equation}  
for all $y \in W$. Let $\delta_0 > 0$ be so
small that 
\begin{equation}\label{eu2}
 \left\{y \in \mathbb C : |y - z_0| \leq \delta_0 \right\} \subseteq
f^n(W) \cap W .
\end{equation} 
$z_0$ is not isolated in $J(f) \backslash \mathcal E(f)$ since $J(f)$
is perfect. There is therefore an element $z_1\in J(f) \backslash
\mathcal E(f)$ such that $0 < \left|z_1-z_0\right| < \delta_0$. Choose
$\delta$ strictly between $\left| z_1 -z_0\right|$ and $\delta_0$ such that 
\begin{equation}\label{cru}
\kappa
\left|z_1-z_0\right| > \delta.
\end{equation}
Set $V_0 = \left\{y \in \mathbb C : |y -z_0| <  \delta \right\}$. 
Then 
\begin{equation}\label{eu102}
\overline{V_0} \cap J(f) \backslash
\mathcal E(f) \subsetneq f^n\left({V_0} \cap J(f) \backslash
\mathcal E(f)\right). 
\end{equation}
Indeed, if
$\left|y-z_0\right| \leq \delta$ (\ref{eu2}) implies that there is a $y' \in W$ such that
$f^n(y') = y$ and then (\ref{eu1}) implies that $\left|y' - z_0\right|
< \delta$. Since $y' \in J(f)\backslash \mathcal E(f)$ when $y \in
J(f)\backslash \mathcal E(f)$ it follows that $\overline{V_0} \cap J(f) \backslash
\mathcal E(f) \subseteq f^n\left({V_0} \cap J(f) \backslash
\mathcal E(f)\right)$. On the other hand, it follows from (\ref{cru}) and (\ref{eu1}) that $f^n(z_1) \notin \overline{V_0}$. This shows
that (\ref{eu102}) holds. Then
$$
S = \left\{ (z,n,f^n(z)) \in \Gamma_F(n,0) : \ z \in V_0  \right\}
$$
is an open bisection in $\Gamma_F$ such that $\overline{V_0} \cap J(f) \backslash
\mathcal E(f) \subseteq s(S)$ and 
$$
\alpha_{S^{-1}} \left(\overline{V_0} \cap J(f) \backslash
\mathcal E(f)\right) \subsetneq {V_0} \cap J(f) \backslash
\mathcal E(f). 
$$
Then $V = V_0 \cap J(f) \backslash \mathcal
E(f)$ is an open subset of $U \cap J(f) \backslash \mathcal
E(f)$ such that $\alpha_{S^{-1}}(\overline{V}) \subsetneq  V$. This
shows that $\Gamma_F$ is locally contracting. 
 \end{proof}

\begin{cor}\label{simpleinf} The $C^*$-algebra
  $C^*_r\left(\Gamma_F\right)$ is simple and purely infinite.
\end{cor}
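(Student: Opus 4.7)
The plan is to invoke directly the results of Anantharaman--Delaroche in \cite{An}, for which all the hypotheses have already been verified in \reflemma{julia}. Specifically, in \cite{An} it is shown that if an étale groupoid $\Gamma$ is essentially free and minimal, then the reduced groupoid $C^*$-algebra $C^*_r(\Gamma)$ is simple; and if $\Gamma$ is additionally locally contracting (and essentially free), then every non-zero hereditary $C^*$-subalgebra of $C^*_r(\Gamma)$ contains an infinite projection, so that $C^*_r(\Gamma)$ is purely infinite once it is also simple.

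Concretely, I would proceed as follows. First, apply \reflemma{julia} to conclude that $\Gamma_F$ is essentially free and minimal, and cite the simplicity criterion of \cite{An} (Proposition 2.4 there) to deduce that $C^*_r(\Gamma_F)$ is simple. Second, invoke the locally contracting property from \reflemma{julia} together with the essential freeness, and cite the corresponding result from \cite{An} (Proposition 2.4) to obtain that every non-zero hereditary $C^*$-subalgebra of $C^*_r(\Gamma_F)$ contains an infinite projection. Combined with simplicity, this is exactly the definition of pure infiniteness, so $C^*_r(\Gamma_F)$ is simple and purely infinite.

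There is essentially no obstacle: the corollary is a direct application of \reflemma{julia} and the general machinery developed in \cite{An}, which is tailor-made for this situation. The only potential subtlety is to note that the definitions of minimal, essentially free, and locally contracting used in the statement of \reflemma{julia} match exactly those of \cite{An}, which has already been arranged in the setup preceding the lemma. Hence the proof will be a one-line citation of \cite{An}.
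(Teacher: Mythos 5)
Your proposal is correct and follows essentially the same route as the paper: minimality plus essential freeness gives simplicity, and essential freeness plus the locally contracting property gives pure infiniteness via Proposition 2.4 of \cite{An}, all inputs being supplied by \reflemma{julia}. The only discrepancy is a citation detail: the paper derives simplicity from Theorem 4.16 of \cite{Th} rather than from \cite{An}, whose Proposition 2.4 is the source only for the infinite-projection (pure infiniteness) half of the argument.
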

\begin{proof} By Theorem 4.16 of \cite{Th} simplicity is a consequence
  of the minimality and essential freeness of $\Gamma_F$. Pure
  infiniteness follows from Proposition 2.4 in \cite{An}
  because $\Gamma_F$ is essentially free and locally contracting.
\end{proof}

\section{The $C^*$-algebra of the exponential function}

For the statement of the next theorem, which is the main result of the
note, recall that the separable, stable, simple
purely infinite $C^*$-algebras which satisfy the universal
coefficient theorem (UCT) of Rosenberg and Schochet, \cite{RS}, is
exactly the class of $C^*$-algebras known from the Kirchberg-Phillips
results, \cite{Ph}, to be classified by their $K$-theory groups alone.

\begin{thm}\label{main}
Let $f : \mathbb C \to \mathbb C$ be an entire transcendental function such that\begin{enumerate}
\item[i)] $f'(z) \neq 0 \ \forall z \in \mathbb C$,
\item[ii)] the Julia set $J(f)$ of $f$ is $\mathbb C$, and
\item[iii)] $\# f^{-1}(f(x)) \geq 2$ for all $x \in \mathbb C$.
\end{enumerate}
Then $C^*_r\left(\Gamma_f\right)$ is the separable stable simple
purely infinite $C^*$-algebra which satisfies the UCT, and $K_0(C^*_r\left(\Gamma_f\right))
\simeq K_1(C^*_r\left(\Gamma_f\right)) \simeq \mathbb Z$.
\end{thm}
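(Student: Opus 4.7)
My plan is to verify the properties needed for the Kirchberg--Phillips classification of $C^*_r(\Gamma_f)$ and then compute its $K$-theory using Theorem \ref{6terms}. The first step is to reduce to the framework of Section 4: hypothesis (iii) forces $\mathcal E(f) = \emptyset$ (if $x \in \mathcal E(f)$ then $f^{-1}(f(x)) = \{x\}$ has cardinality $1$, contradicting (iii)), and combined with (ii) this gives $J(f)\setminus\mathcal E(f) = \mathbb C$, so the local homeomorphism $F$ of Section 4 is $f$ itself on all of $\mathbb C$ and $\Gamma_F = \Gamma_f$. Corollary \ref{simpleinf} then delivers simplicity and pure infiniteness, while separability is immediate from the second countability of $\mathbb C$. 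For nuclearity and the UCT I would invoke Proposition \ref{cuntzpimsner} to identify $C^*_r(\Gamma_f)$ with the Cuntz--Pimsner algebra $\mathcal O_E$ over the separable nuclear UCT algebra $C_0(\mathbb C)$; both properties pass to $\mathcal O_E$ by standard results on Cuntz--Pimsner algebras. Stability then follows from Zhang's dichotomy for $\sigma$-unital simple purely infinite $C^*$-algebras (stable iff non-unital), the non-unitality of $C^*_r(\Gamma_f)$ being a consequence of the non-compactness of its unit space $\mathbb C$.

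To compute the $K$-theory I would apply Theorem \ref{6terms} with $X = \mathbb C$. By Bott periodicity, $K_0(C_0(\mathbb C)) \simeq \mathbb Z$ (generated by the Bott class) and $K_1(C_0(\mathbb C)) = 0$, so the six-term exact sequence collapses to
\[
0 \longrightarrow K_1\bigl(C^*_r(\Gamma_f)\bigr) \longrightarrow \mathbb Z \xrightarrow{\,\id_* - [E]_*\,} \mathbb Z \longrightarrow K_0\bigl(C^*_r(\Gamma_f)\bigr) \longrightarrow 0.
\]
Thus the conclusion $K_0 \simeq K_1 \simeq \mathbb Z$ is equivalent to the assertion that $[E]_* = \id$ on $K_0(C_0(\mathbb C)) \simeq \mathbb Z$.

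The hard part will be this last identification. Under the Bott isomorphism $KK(C_0(\mathbb C), C_0(\mathbb C)) \simeq \mathbb Z$, the class $[E]$ is multiplication by some integer $n$, and the goal is to pin down $n = +1$. Morally $n$ should be a $K$-theoretic ``degree'' of the local biholomorphism $f$: since $f'$ never vanishes and $f$ is holomorphic, $f$ is an orientation-preserving local diffeomorphism of $\mathbb C$, and this orientation datum ought to force $n = +1$. Concretely, I would attempt either to compute the Kasparov product $[E] \cdot \beta$ against the Bott generator $\beta$ via an explicit Fredholm-index calculation inside the Kasparov bimodule representing $[E]$, or to construct an explicit KK-equivalence from $E$ to the identity correspondence, exploiting the contractibility of $\mathbb C$ together with the absence of critical points of $f$. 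A key subtlety is that $E$ is not full (e.g.\ for $f(z)=e^z$ the image $\mathbb C^{*}$ is a proper ideal in $\mathbb C$), so $\mathbb K(E)$ is not Morita equivalent to $C_0(\mathbb C)$, and the identification cannot be reduced to a trivial Morita argument. As a sanity check, the same machinery applied to the antiholomorphic counterpart $\bar f$ (orientation-reversing) should yield $n = -1$, hence $K_0 = \mathbb Z/2$ and $K_1 = 0$, which agrees with $K_*(\mathcal O_3)$ as advertised in the abstract.
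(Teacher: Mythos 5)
Your treatment of the structural properties is sound and matches the paper's: hypothesis (iii) forces $\mathcal E(f)=\emptyset$, so Corollary \ref{simpleinf} applies with the map $F$ of Section~4 equal to $f$ on all of $\mathbb C$; separability comes from second countability; the UCT comes through the Cuntz--Pimsner picture of Proposition \ref{cuntzpimsner} (the paper cites Proposition 8.8 of \cite{Ka} for precisely this); and stability follows from Zhang's dichotomy since the algebra is non-unital. Your reduction of the $K$-theory computation is also correct: with $K_0(C_0(\mathbb C))\simeq\mathbb Z$ and $K_1(C_0(\mathbb C))=0$ the six-term sequence of Theorem \ref{6terms} collapses exactly as you write, and everything hinges on showing that $[E]$ acts as the identity, i.e.\ that the integer $n$ with $[E]=n\cdot\left[\id_{C_0(\mathbb C)}\right]$ equals $+1$.

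That last step is where your proposal has a genuine gap. You correctly identify orientation as the decisive datum, but neither of the two routes you suggest --- a Fredholm-index computation of the Kasparov product with the Bott class, or a direct $KK$-equivalence between $E$ and the identity correspondence --- is actually carried out, and the first would be delicate precisely because of the non-fullness of $E$ that you yourself flag. The paper closes the gap with a localization-plus-isotopy argument that sidesteps all of this: choose a small disc $\Delta$ on which $f$ is injective, and note that the restriction map $i^*:KK\left(C_0(\mathbb C),C_0(\mathbb C)\right)\to KK\left(C_0(\Delta),C_0(\mathbb C)\right)$ is an isomorphism because $\Delta\hookrightarrow\mathbb C$ is a homotopy equivalence. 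One checks from the definition of $E$ that $i^*[E]$ is the class of the honest $*$-homomorphism $g\mapsto g\circ f^{-1}$ from $C_0(\Delta)$ into $C_0(f(\Delta))\subseteq C_0(\mathbb C)$. Schoenfliess' theorem extends $f|_\Delta$ to a homeomorphism of $\mathbb C$, which is orientation-preserving because $f$ is holomorphic and hence isotopic to the identity; this yields a homotopy from that $*$-homomorphism to the inclusion $C_0(\Delta)\subseteq C_0(\mathbb C)$, whence $i^*[E]=i^*[\id]$ and therefore $[E]=\left[\id_{C_0(\mathbb C)}\right]$. You would need to supply an argument of this kind, or genuinely execute one of your two proposed computations, for the proof to be complete; as written, the value $n=+1$ rests on a heuristic appeal to ``degree'' rather than a proof.
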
    
\begin{proof} First observe that $f$ is a
  local homeomorphism because it is holomorphic with no critical points
  by assumption i). Hence $C^*_r\left(\Gamma_f\right)$ is defined. As we pointed out above the
  separability of $C^*_r\left(\Gamma_f\right)$ follows because
  $\mathbb C$ has a countable base for its topology. It follows from
  Proposition \ref{cuntzpimsner}, Lemma \ref{rep}  and Proposition 8.8 of
  \cite{Ka} that $C^*_r\left(\Gamma_f\right)$ satisfies the UCT. Since
  iii) implies that $\mathcal E(f) = \emptyset$ it follows from
  Corollary \ref{simpleinf} that $C^*_r\left(\Gamma_f\right)$ is simple and purely infinite. Since $C^*_r\left(\Gamma_f\right)$ is not
  unital (because $\mathbb C$ is not compact), it follows from Theorem
  1.2 of \cite{Z} that $C^*_r\left(\Gamma_f\right)$ is stable.

It remains now only to calculate the $K$-theory of
$C^*_r\left(\Gamma_f\right)$. We use Theorem \ref{6terms} for this and
we need therefore to determine the action on $K$-theory of the
$KK$-element $[E]$. Let $\Delta$ be a small open disc
centered at $0 \in \mathbb C$ such that $f$ is injective on
$\overline{\Delta}$. Set $V = f(\Delta)$ and let $i : C_0(\Delta) \to
C_0(\mathbb C)$ and $j : C_0(V) \to C_0(\mathbb C)$ denote the natural
embeddings. Define $\psi_f : C_0(\Delta) \to C_0(V)$ such that $\psi_f(g) =
g \circ f^{-1}$. It is easy to see that
$$
i^*[E] = j_*[\psi_f] = [j \circ \psi_f]
$$
in $KK\left(C_0(\Delta),C_0(\mathbb C)\right)$. To proceed we apply
Schoenfliess' theorem to get a
homeomorphism $F : \mathbb C\to \mathbb C$ extending $f : U \to
V$. Note that $F$ must be orientation preserving since $f$ is. It follows
therefore that $F$
is isotopic to the identity, cf. Theorem 2.4.2 on page 92 in
\cite{L}. This shows that $j \circ \psi_f$ is
homotopic to $i$ and we conclude therefore that $[j \circ \psi_f] =
[i] = i^*\left[\id_{C_0(\mathbb C)}\right]$. Since $i^* :
KK\left(C_0(\mathbb C),C_0(\mathbb C)\right)  \to
KK\left(C_0(\Delta),C_0(\mathbb C)\right)$ is an isomorphism it
follows that $[E] = \left[\id_{C_0(\mathbb C)}\right]$. The conclusion
that $K_0(C^*_r\left(\Gamma_f\right))
\simeq K_1(C^*_r\left(\Gamma_f\right)) \simeq \mathbb Z$ follows now
straightforwardly from the generalised Pimsner-Voiculescu exact
sequence of Theorem \ref{6terms}.   
\end{proof}

The function $f(z) = \lambda e^z$ clearly satisfies assumptions i) and
iii) of Theorem \ref{main} when $\lambda \neq 0$. Furthermore, when
$\lambda > \frac{1}{e}$ it is shown in \cite{De} that also assumption
ii) holds, extending the result of Misiurewics, \cite{M},  dealing
with the case $\lambda
= 1$.

\section{The $C^*$-algebra of $e^{\overline{z}}$}

Let $\mathbb K$ be the $C^*$-algebra of compact operators on an
infinite-dimensional separable Hilbert space.

\begin{thm}\label{main2}
Let $f : \mathbb C \to \mathbb C$ be an entire transcendental function such that
\begin{enumerate}
\item[i)] $f'(z) \neq 0 \ \forall z \in \mathbb C$,
\item[ii)] the Julia set $J(f)$ of $f$ is $\mathbb C$,
\item[iii)]  $\# f^{-1}(f(x)) \geq 2$ for all $x \in \mathbb C$, and
\item[iv)] $\overline{f(z)} = f\left(\overline{z}\right), \ z \in
  \mathbb C$.
\end{enumerate}
Define $\overline{f} : \mathbb C \to \mathbb C$ such that
$\overline{f}(z) = \overline{f(z)}$. Then $C^*_r\left(\Gamma_{\overline{f}}\right) \simeq \mathcal O_3
  \otimes \mathbb K$ where $\mathcal O_3$ is the Cuntz-algebra with
  $K_0(\mathcal O_3) \simeq \mathbb Z_2$ and $K_1\left(\mathcal O_3\right) =
0$, cf. \cite{C}.
\end{thm}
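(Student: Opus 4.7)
The plan is to follow the proof of \refthm{main} closely, replacing its orientation-preserving Schoenflies argument by an orientation-reversing one and routing all the dynamics through the second iterate, which by assumption (iv) satisfies $\overline{f}^2 = f^2$; more generally $\overline{f}^{2k} = f^{2k}$ and $\overline{f}^{2k+1} = c\circ f^{2k+1}$, where $c$ denotes complex conjugation.

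First I would check the elementary translations: $\overline{f}$ is a local homeomorphism of $\mathbb{C}$ (the composition of the local homeomorphism $f$ with $c$), $\#\overline{f}^{-1}(\overline{f}(x)) = \#f^{-1}(f(x)) \geq 2$, and hence $\mathcal{E}(\overline{f}) = \emptyset$. Next I would establish the analogue of \reflemma{julia}, i.e.\ minimality, essential freeness and local contractivity of $\Gamma_{\overline{f}}$. For minimality, observe that the $f$-full orbit of any $x$ lies inside the $\overline{f}$-full orbit: given $f^i(y) = f^j(x)$ one can bring both indices up to even values and use $\overline{f}^{2k} = f^{2k}$ to translate the equation into $\overline{f}$-iterates; density of the $f$-full orbit then comes from \reflemma{julia} applied to $f$. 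For essential freeness one needs $\{z\in\mathbb{C} : \overline{f}^i(z) = z\}$ to have empty interior: for even $i$ this reduces to the corresponding statement for $f^i$ treated in \reflemma{julia}, while for odd $i$ the condition becomes $f^i(z) = \overline{z}$, which cannot hold on any open set by the Cauchy-Riemann equations applied to the entire function $f^i$. For local contractivity, pick a repelling periodic point $z_0$ of $f$ inside the given open set $U$ (possible since $J(f) = \mathbb{C}$ and repelling periodic points are dense); then $z_0$ is also repelling for $\overline{f}^{2n} = f^{2n}$ and the explicit contraction of \reflemma{julia} applies verbatim with $\overline{f}^{2n}$ in place of $f^n$.

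Once these dynamical properties are in place the rest of the argument runs exactly as in \refthm{main}, showing that $A := C^*_r(\Gamma_{\overline{f}})$ is separable, stable, simple, purely infinite, and satisfies the UCT. To identify $A$ with $\mathcal{O}_3\otimes\mathbb{K}$ via Kirchberg-Phillips it remains to show $K_0(A) \simeq \mathbb{Z}/2\mathbb{Z}$ and $K_1(A) = 0$. I would compute the class $[E] \in KK(C_0(\mathbb{C}), C_0(\mathbb{C}))$ by the same Schoenflies strategy as in \refthm{main}: the local branch $\overline{f}|_\Delta : \Delta \to V$ is now orientation-reversing, so its Schoenflies extension is isotopic through homeomorphisms to $c$ (by applying the theorem cited from \cite{L} to the composition with $c$), yielding $[E] = [c^*]$ in $KK(C_0(\mathbb{C}), C_0(\mathbb{C}))$. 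Since $c$ is orientation-reversing on $\mathbb{C}^+ = S^2$, it induces multiplication by $-1$ on $K_0(C_0(\mathbb{C})) \simeq \mathbb{Z}$, and $\id_* - [E]_*$ is therefore multiplication by $2$. Plugging this into \refthm{6terms} together with $K_1(C_0(\mathbb{C})) = 0$ collapses the generalised Pimsner-Voiculescu sequence to
\begin{equation*}
\mathbb{Z} \xrightarrow{\times 2} \mathbb{Z} \to K_0(A) \to 0 \quad \text{and} \quad 0 \to K_1(A) \to 0,
\end{equation*}
from which $K_0(A) \simeq \mathbb{Z}/2\mathbb{Z}$ and $K_1(A) = 0$, as required.

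The main obstacle is the adaptation of \reflemma{julia}, whose tools (Montel's theorem, density of repelling periodic points, the identity theorem for entire functions) are genuinely holomorphic and do not apply directly to the anti-holomorphic map $\overline{f}$. The remedy is to route everything through the holomorphic iterate $f^2 = \overline{f}^2$ (noting that $J(f^2) = J(f) = \mathbb{C}$), with the single odd-iterate case in essential freeness handled separately by a direct Cauchy-Riemann argument.
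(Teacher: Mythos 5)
Your proposal is correct and follows essentially the same route as the paper: the dynamical properties of $\Gamma_{\overline{f}}$ are obtained by routing through the holomorphic iterate $f^2=\overline{f}^2$ (the paper phrases this as $\Gamma_{f^2}\subseteq\Gamma_{\overline{f}}$ together with Lemma~\ref{julia}), and the $K$-theory is computed by observing that the Schoenfliess extension is now orientation reversing, so $[E]=-\left[\id_{C_0(\mathbb C)}\right]$ and the Pimsner--Voiculescu sequence gives $\mathbb Z_2$ and $0$. The only cosmetic difference is in essential freeness, where the paper uses the single inclusion $\{z:\overline{f}^i(z)=z\}\subseteq\{z:f^{2i}(z)=z\}$ for all $i$ rather than your even/odd case split with a Cauchy--Riemann argument; likewise, for minimality only the $f^2$-orbit (not the full $f$-orbit, whose containment has a parity subtlety) is needed, exactly as you indicate in your closing remark.
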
    
\begin{proof} $C^*_r\left(\Gamma_{\overline{f}}\right)$ is separable
  and satisfies the UCT for the same reason that
  $C^*_r\left(\Gamma_{{f}}\right)$ has these properties. Since $J(f^2)
  = J(f) = \mathbb C$ and $\mathcal E(f^2) = \emptyset$ by iii) we
  conclude from Lemma \ref{julia} that $\Gamma_{f^2}$ is minimal,
  essentially free and locally contracting. Since $\Gamma_{f^2}
  \subseteq \Gamma_{\overline{f}}$ it follows that
  $\Gamma_{\overline{f}}$ is minimal and locally
  contracting. Furthermore, by using that
$$
\left\{ z \in \mathbb C : \ \overline{f}^i(z) = z \right\} \subseteq
\left\{ z \in \mathbb C : \ f^{2i}(z) = z\right\},
$$
it follows also that $\Gamma_{\overline{f}}$ is essentially free
because $\Gamma_{f^2}$ is. As in the proof of Corollary
\ref{simpleinf} we conclude now that $C^*_r\left(\Gamma_{\overline{f}}\right)$ is simple and purely
infinite. Finally, since $\overline{f}$ is orientation reversing the calculation
of the $K$-theory in the proof of Theorem \ref{main} now yields the
conclusion that $[E] = - \left[\id_{C_0(\mathbb C)}\right]$, leading
to the result that
$K_0\left(C^*_r\left(\Gamma_{\overline{f}}\right)\right) \simeq
\mathbb Z_2$ while
$K_1\left(C^*_r\left(\Gamma_{\overline{f}}\right)\right) = 0$. Hence
the theorem of Zhang, \cite{Z}, and the Kirchberg-Phillips
classification theorem, Theorem 4.2.4 of \cite{Ph}, imply that
$C^*_r\left(\Gamma_{\overline{f}}\right) \simeq \mathcal O_3 \otimes
\mathbb K$.
\end{proof}

\end{document}